\begin{document}

\newfont{\teneufm}{eufm10}
\newfont{\seveneufm}{eufm7}
\newfont{\fiveeufm}{eufm5}
%
%
\newfam\eufmfam
                     \textfont\eufmfam=\teneufm \scriptfont\eufmfam=\seveneufm
                     \scriptscriptfont\eufmfam=\fiveeufm

%
%
\def\frak#1{{\fam\eufmfam\relax#1}}
%


\def\bbbr{{\rm I\!R}} 
\def\bbbc{{\rm I\!C}} 
\def\bbbm{{\rm I\!M}}
\def\bbbn{{\rm I\!N}} 
\def\bbbf{{\rm I\!F}}
\def\bbbh{{\rm I\!H}}
\def\bbbk{{\rm I\!K}}
\def\bbbl{{\rm I\!L}}
\def\bbbp{{\rm I\!P}}
\newcommand{\lcm}{{\rm lcm}}
\def\bbbone{{\mathchoice {\rm 1\mskip-4mu l} {\rm 1\mskip-4mu l}
{\rm 1\mskip-4.5mu l} {\rm 1\mskip-5mu l}}}
\def\bbbc{{\mathchoice {\setbox0=\hbox{$\displaystyle\rm C$}\hbox{\hbox
to0pt{\kern0.4\wd0\vrule height0.9\ht0\hss}\box0}}
{\setbox0=\hbox{$\textstyle\rm C$}\hbox{\hbox
to0pt{\kern0.4\wd0\vrule height0.9\ht0\hss}\box0}}
{\setbox0=\hbox{$\scriptstyle\rm C$}\hbox{\hbox
to0pt{\kern0.4\wd0\vrule height0.9\ht0\hss}\box0}}
{\setbox0=\hbox{$\scriptscriptstyle\rm C$}\hbox{\hbox
to0pt{\kern0.4\wd0\vrule height0.9\ht0\hss}\box0}}}}
\def\bbbq{{\mathchoice {\setbox0=\hbox{$\displaystyle\rm
Q$}\hbox{\raise
0.15\ht0\hbox to0pt{\kern0.4\wd0\vrule height0.8\ht0\hss}\box0}}
{\setbox0=\hbox{$\textstyle\rm Q$}\hbox{\raise
0.15\ht0\hbox to0pt{\kern0.4\wd0\vrule height0.8\ht0\hss}\box0}}
{\setbox0=\hbox{$\scriptstyle\rm Q$}\hbox{\raise
0.15\ht0\hbox to0pt{\kern0.4\wd0\vrule height0.7\ht0\hss}\box0}}
{\setbox0=\hbox{$\scriptscriptstyle\rm Q$}\hbox{\raise
0.15\ht0\hbox to0pt{\kern0.4\wd0\vrule height0.7\ht0\hss}\box0}}}}
\def\bbbt{{\mathchoice {\setbox0=\hbox{$\displaystyle\rm
T$}\hbox{\hbox to0pt{\kern0.3\wd0\vrule height0.9\ht0\hss}\box0}}
{\setbox0=\hbox{$\textstyle\rm T$}\hbox{\hbox
to0pt{\kern0.3\wd0\vrule height0.9\ht0\hss}\box0}}
{\setbox0=\hbox{$\scriptstyle\rm T$}\hbox{\hbox
to0pt{\kern0.3\wd0\vrule height0.9\ht0\hss}\box0}}
{\setbox0=\hbox{$\scriptscriptstyle\rm T$}\hbox{\hbox
to0pt{\kern0.3\wd0\vrule height0.9\ht0\hss}\box0}}}}
\def\bbbs{{\mathchoice
{\setbox0=\hbox{$\displaystyle     \rm S$}\hbox{\raise0.5\ht0\hbox
to0pt{\kern0.35\wd0\vrule height0.45\ht0\hss}\hbox
to0pt{\kern0.55\wd0\vrule height0.5\ht0\hss}\box0}}
{\setbox0=\hbox{$\textstyle        \rm S$}\hbox{\raise0.5\ht0\hbox
to0pt{\kern0.35\wd0\vrule height0.45\ht0\hss}\hbox
to0pt{\kern0.55\wd0\vrule height0.5\ht0\hss}\box0}}
{\setbox0=\hbox{$\scriptstyle      \rm S$}\hbox{\raise0.5\ht0\hbox
to0pt{\kern0.35\wd0\vrule height0.45\ht0\hss}\raise0.05\ht0\hbox
to0pt{\kern0.5\wd0\vrule height0.45\ht0\hss}\box0}}
{\setbox0=\hbox{$\scriptscriptstyle\rm S$}\hbox{\raise0.5\ht0\hbox
to0pt{\kern0.4\wd0\vrule height0.45\ht0\hss}\raise0.05\ht0\hbox
to0pt{\kern0.55\wd0\vrule height0.45\ht0\hss}\box0}}}}
\def\bbbz{{\mathchoice {\hbox{$\sf\textstyle Z\kern-0.4em Z$}}
{\hbox{$\sf\textstyle Z\kern-0.4em Z$}}
{\hbox{$\sf\scriptstyle Z\kern-0.3em Z$}}
{\hbox{$\sf\scriptscriptstyle Z\kern-0.2em Z$}}}}
\def\ts{\thinspace}

\newtheorem{theorem}{Theorem}
\newtheorem{lemma}[theorem]{Lemma}
\newtheorem{claim}[theorem]{Claim}
\newtheorem{cor}[theorem]{Corollary}
\newtheorem{prop}[theorem]{Proposition}
\newtheorem{definition}{Definition}
\newtheorem{question}[theorem]{Open Question}

\def\squareforqed{\hbox{\rlap{$\sqcap$}$\sqcup$}}
\def\qed{\ifmmode\squareforqed\else{\unskip\nobreak\hfil
\penalty50\hskip1em\null\nobreak\hfil\squareforqed
\parfillskip=0pt\finalhyphendemerits=0\endgraf}\fi}

\def\cA{{\mathcal A}}
\def\cB{{\mathcal B}}
\def\cC{{\mathcal C}}
\def\cD{{\mathcal D}}
\def\cE{{\mathcal E}}
\def\cF{{\mathcal F}}
\def\cG{{\mathcal G}}
\def\cH{{\mathcal H}}
\def\cI{{\mathcal I}}
\def\cJ{{\mathcal J}}
\def\cK{{\mathcal K}}
\def\cL{{\mathcal L}}
\def\cM{{\mathcal M}}
\def\cN{{\mathcal N}}
\def\cO{{\mathcal O}}
\def\cP{{\mathcal P}}
\def\cQ{{\mathcal Q}}
\def\cR{{\mathcal R}}
\def\cS{{\mathcal S}}
\def\cT{{\mathcal T}}
\def\cU{{\mathcal U}}
\def\cV{{\mathcal V}}
\def\cW{{\mathcal W}}
\def\cX{{\mathcal X}}
\def\cY{{\mathcal Y}}
\def\cZ{{\mathcal Z}}

\newcommand{\comm}[1]{\marginpar{%
\vskip-\baselineskip 
\raggedright\footnotesize
\itshape\hrule\smallskip#1\par\smallskip\hrule}}





\hyphenation{re-pub-lished}

\def\ord{{\mathrm{ord}}}
\def\Nm{{\mathrm{Nm}}}
\renewcommand{\vec}[1]{\mathbf{#1}}

\def \F{{\bbbf}}
\def \L{{\bbbl}}
\def \K{{\bbbk}}
\def \Z{{\bbbz}}
\def \N{{\bbbn}}
\def \Q{{\bbbq}}
\def\E{{\mathbf E}}
\def\bH{{\mathbf H}}
\def\G{{\mathcal G}}
\def\O{{\mathcal O}}
\def\cS{{\mathcal S}}
\def \R{{\bbbr}}
\def\Fp{\F_p}
\def \fp{\Fp^*}
\def\\{\cr}
\def\({\left(}
\def\){\right)}
\def\fl#1{\left\lfloor#1\right\rfloor}
\def\rf#1{\left\lceil#1\right\rceil}

\def\Zm{\Z_m}
\def\Zt{\Z_t}
\def\Zp{\Z_p}
\def\Um{\cU_m}
\def\Ut{\cU_t}
\def\Up{\cU_p}

\def\ep{{\mathbf{e}}_p}

\def \Prob{{\mathrm {}}}

\def\LC{{\cL}_{C,\cF}(Q)}
\def\LCn{{\cL}_{C,\cF}(nG)}
\def\Mrs{\cM_{r,s}\(\F_p\)}

\def\Fbar{\overline{\F}_q}
\def\Fn{\F_{q^n}}
\def\En{\E(\Fn)}
\def\Sha{\mathrm{III}_p}

\def\mand{\qquad \mbox{and} \qquad}

\def\MOV{{\bf{MOV}}}


\title{{\bf Tate-Shafarevich Groups 
and Frobenius Fields of  
Reductions of Elliptic Curves}}

\author{
{\sc Igor E. Shparlinski} \\
{Dept. of Computing, Macquarie University} \\
{Sydney, NSW 2109, Australia} \\
{\tt igor@ics.mq.edu.au}}

\date{\today}
\pagenumbering{arabic}

\maketitle

\begin{abstract}
Let $\E/\Q$ be a fixed elliptic curve over $\Q$ 
which  does not have complex multiplication.
Assuming the  Generalized Riemann 
Hypothesis, 
A.~C.~Cojocaru and W.~Duke have obtained an  asymptotic formula 
for the number of primes $p\le x$ such that the reduction of 
$\E$ modulo $p$ has a trivial Tate-Shafarevich group. Recent 
results of A.~C.~Cojocaru and C.~David lead to a better
error term.  We introduce
a new argument in the scheme of the proof which gives further 
improvement. 
\end{abstract}

\paragraph{2000 Mathematics Subject Classification:}
\quad 11G07, 11N35, 11L40, 14H52

\section{Introduction}

Let $\E/\Q$ be a fixed elliptic curve over $\Q$ of conductor $N$, 
we refer to~\cite{Sil} for the background on elliptic curves. 
For a prime $p\nmid N$ we denote the reduction of
$\E$ modulo $p$  as $\E_p/\F_p$

As in~\cite{CojDuke}, we use
$\Sha$  to denote the {\it Tate-Shafarevich group\/} of  $\E_p/\F_p$
which is an analogue of the classical Tate-Shafarevich
group (see~\cite{Sil}) defined with respect to $\E_p$ and
the function field $\K$ of $\E_p$, that is, 
$$
\Sha= \mathrm{III}(\E_p/K),
$$
we refer to~\cite{CojDuke} for a precise definition.

Let $\pi_{TS}(x)$ be the counting function of primes $p\nmid N$ 
for which $\Sha$ is trivial. More formally,
$$\pi_{TS}(x)=  \# \{p\le x\ | \  p\nmid N, \
 \#\Sha =1\}.
$$
As usual, we also use $\pi(x)$ to denote the 
number of primes $p\le x$.

Cojocaru and Duke~\cite[Proposition~5.3]{CojDuke} 
have proved that if $\E$  does not have complex 
multiplication then under the {\it Generalized Riemann 
Hypothesis\/} (GRH) the following 
asymptotic formula
\begin{equation}
\label{eq:CD asymp}
\pi_{TS}(x) = \alpha \pi(x) + R(x) 
\end{equation}
holds for some explicitly defined constant $\alpha$
depending on $\E$, 
where 
\begin{equation}
\label{eq:CD error 1}
R(x) = O(x^{53/54+ o(1)})
\end{equation}
(hereafter implicit constants in the symbols
`$O$', `$\ll$' and `$\gg$' may depend on
$\E$). Furthermore, we have $\alpha > 0$ if and only if
$\E$ has an irrational point of order two.

The proof of~\eqref{eq:CD error 1}  is based on the square sieve of
Heath-Brown~\cite{HB1} combined with a bound of certain character
sums.  This character sum  has been estimated in a sharper way by 
Cojocaru and David~\cite[Theorem~3]{CojDav}, who also noticed that 
using their estimate in the proof of~\eqref{eq:CD error 1} 
from~\cite{CojDuke}  
reduces the error term in~\eqref{eq:CD asymp} to 
\begin{equation}
\label{eq:CD error 2}
R(x) = O(x^{41/42+ o(1)}).
\end{equation}

Here we introduce some additional element in the approach 
of~\cite{CojDuke}, which we also
combine  with  the aforementioned stronger bound of character sums
of~\cite[Theorem~3]{CojDav}, to obtain a further improvement 
of~\eqref{eq:CD error 1} and~\eqref{eq:CD error 2}. 
Namely, we obtain an extra saving
from   taking advantage of averaging over a certain
parameter $m$, which appears in the argument of Cojocaru and
Duke~\cite{CojDuke}.  To take the most out of this, we apply the bound
of double  character sums due to Heath-Brown~\cite{HB2}. 
This yields the following estimate:

\begin{theorem}
\label{thm:Sha}
Suppose $\E$  does not have complex 
multiplication and also assume that the GRH holds.
Then the asymptotic formula~\eqref{eq:CD asymp}
holds with 
$$
R(x) = O(x^{39/40+ o(1)}).
$$
\end{theorem}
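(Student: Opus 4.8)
The starting point is the characterization, established in \cite{CojDuke}, that triviality of $\Sha$ for $\E_p$ is governed by the field $\Q(\pi_p)$ generated by the Frobenius eigenvalue $\pi_p$ (equivalently, by whether the discriminant of the Frobenius, $a_p^2 - 4p$, is essentially a perfect square up to an explicitly bounded factor). Concretely, one is led to counting primes $p\le x$ for which $a_p^2 - 4p = m \square$ with $m$ ranging over a controlled set of squarefree integers (the obstruction parameter coming from the analysis of the Frobenius field and the structure of $\E_p(\F_p)$). The plan is to detect the condition ``$a_p^2-4p$ is $m$ times a square'' by the square sieve of Heath-Brown \cite{HB1}, exactly as in \cite{CojDuke}, but to keep the parameter $m$ \emph{inside} the sum rather than fixing it, and to exploit a second average over $m$.

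First I would set up the square sieve: for a suitable set $\mathcal{P}$ of auxiliary primes $\ell$ of size roughly $z$, write the indicator of ``$n$ is a square'' in terms of $\sum_{\ell}\left(\frac{n}{\ell}\right)$, and apply it to $n = (a_p^2-4p)/m$. The diagonal term contributes the main term $\alpha\pi(x)$, and the off-diagonal contributes character sums of the shape $\sum_{p\le x}\left(\frac{(a_p^2-4p)\overline{m}}{\ell_1\ell_2}\right)$. For each fixed pair $\ell_1\ne\ell_2$ this is a character sum over primes in the Frobenius data, and under GRH the effective Chebotarev theorem (in the form used by \cite{CojDuke}, sharpened by \cite[Theorem~3]{CojDav}) bounds it by roughly $x^{1/2}\ell_1\ell_2 \cdot$ (log factors), which is the source of the error term. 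The new ingredient is that, because we have \emph{not} fixed $m$, after interchanging the order of summation we must bound a sum over $m$ of Jacobi symbols $\left(\frac{\overline{m}}{\ell_1\ell_2}\right)$ (times the $p$-dependent part), and here I would invoke Heath-Brown's large sieve / double character sum inequality \cite{HB2} to save an extra factor from the bilinear structure in the $(m,\ell_1\ell_2)$ variables, rather than estimating each $m$ trivially.

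The bookkeeping then runs as follows. Split the contribution into: (i) the diagonal $\ell_1=\ell_2$, giving the main term plus an error $\ll x/z \cdot (\text{stuff})$ from the density of $\mathcal{P}$; (ii) the ``small modulus'' off-diagonal, handled by GRH-Chebotarev as in \cite{CojDuke,CojDav}, contributing $\ll x^{1/2} z^2 M^{o(1)}$ where $M$ is the range of $m$; (iii) the genuinely bilinear piece, where \cite{HB2} gives a bound of the form $\ll (x z^2 M)^{1/2} (xz^2 + M)^{1/2} x^{o(1)}$ or similar after Cauchy--Schwarz and the double-character-sum estimate. Optimizing the sieve length $z$ against $x$ and $M$ — with $M$ itself dictated by the elliptic-curve input (the range of admissible $m$ is a small power of $x$) — balances the three error contributions and yields the exponent $39/40$.

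The main obstacle I anticipate is technical rather than conceptual: verifying that Heath-Brown's double character sum estimate \cite{HB2} applies cleanly to the sums that arise, i.e.\ that the coefficients attached to the $m$-variable (which encode the elliptic-curve condition picking out which $m$ occur, and with what multiplicity via the count of $p$ with given $(a_p,m)$) are genuinely arbitrary bounded complex numbers, so that no spurious multiplicative structure is lost. One must also check that the parameter $m$ really does range over an interval (or a set with enough flexibility) long enough for the bilinear bound to beat the trivial one, and that the GRH-Chebotarev error in step (ii), now summed over $m$, does not dominate. Once these are in place, the optimization is routine and produces $R(x)=O(x^{39/40+o(1)})$. \qed
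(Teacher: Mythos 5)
Your plan is essentially the paper's proof: the square sieve of Heath--Brown \cite{HB1} applied to $m(4p-a_p^2)$ with the parameter $m$ kept inside the sum and averaged, the Cojocaru--David GRH bound \cite[Theorem~3]{CojDav} for the prime sum $U(x;\ell_1\ell_2)$, and Heath-Brown's mean-value estimate for real character sums \cite{HB2} (after Cauchy--Schwarz) for the $m$-average, followed by optimization of the sieve length $z$ and of the Cojocaru--Duke cutoff $y$ (so that $m$ ranges up to $4x/y^2$), which indeed balances to the exponent $39/40$. The applicability of \cite{HB2} that you flag as the main risk is in fact immediate, since the Jacobi symbol factors as $(\frac{-m}{\ell_1\ell_2})\,U(x;\ell_1\ell_2)$ and the $m$-coefficients reduce to $f(m)=1$ over squarefree $m$ in an interval.
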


%
%
%

The main goal of~\cite{CojDav} is to estimate 
$\Pi(\K,x)$ which is the number of primes $p\le x$
with $p\nmid N$ and such that a root of the 
{\it Frobenius endomorphism\/} of $\E_p/\F_p$
generates the imaginary quadratic field $\K$. 
The famous {\it Lang-Trotter conjecture\/}, which asserts
that if $\E$  does not have complex 
multiplication  then
$$
\Pi(\K,x) \sim \beta(\K) \frac{x^{1/2}}{\log x}
$$
with some constant $\beta>0$ depending on $\K$ (and on $\E$), 
remains open. However, under the GRH, the bound
\begin{equation}
\label{eq:Pi nonunif}
\Pi(\K,x) \le C(\K) \frac{x^{4/5}}{\log x} 
\end{equation}
has been given by Cojocaru and David~\cite[Theorem~2]{CojDav},
where the constant $C(\K)$ depends on $\K$ (and on $\E$).
Moreover, using the aforementioned new bound 
of  character sums, 
Cojocaru and David~\cite[Corollary~4]{CojDav}
have given a weaker, but uniform with respect to $\K$,  bound 
\begin{equation}
\label{eq:Pi unif}
\Pi(\K,x) = O\(x^{13/14}\log x\).
\end{equation} 

For real $4x \ge u > v \ge 1$, we now consider the average value 
$$
\sigma(x;u,v) = 
\sum_{\substack{u-v \le m \le u\\m~\text{squarefree}}}  \Pi(\K_m,x) 
$$
where $\K_m = \Q(\sqrt{-m})$. 
We also put
$$
\sigma(x;v,v) = \sigma(x;v).
$$ 
Clearly, the 
nonuniform bound~\eqref{eq:Pi nonunif} cannot be used 
to estimate $\sigma(x;u,v)$, while~\eqref{eq:Pi unif} 
immediately implies that uniformly over $u$, 
\begin{equation}
\label{eq:CD bound}
\sigma(x;u,v) = O\(v x^{13/14}\log x\).
\end{equation} 
Since we trivially have  $\sigma(x;u,v) \le \pi(x)$, 
the above bound is nontrivial only for $v \le x^{1/14}$.
Here we obtain a more accurate bound which remains nontrivial
for values of $v$ up to $x^{1/13 - \varepsilon}$ for 
arbitrary $\varepsilon > 0$ and sufficiently large $x$.

\begin{theorem}
\label{thm:sigma}
Suppose $\E$  does not have complex 
multiplication and also assume that the GRH holds.
Then for $4x \ge u > v \ge 1$ we have
$$
\sigma(x;u,v)  \le (v x)^{55/59+o(1)}
$$
and 
$$
\sigma(x;v)  \le v^{13/14} x^{13/14+o(1)}.
$$
\end{theorem}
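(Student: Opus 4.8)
The plan is to express $\sigma(x;u,v)$ as a count amenable to Heath--Brown's square sieve~\cite{HB1} and then, in the error term, to split off a character sum over $p$ (estimated by Theorem~3 of~\cite{CojDav}) from one over $m$, gaining from the average over $m$ by a Burgess-type estimate and, for the second bound, by the mean value estimate for real character sums of~\cite{HB2}. Write $a_p$ for the trace of Frobenius of $\E_p$; by the Hasse bound one has $1\le 4p-a_p^2$ for all $p\nmid N$. A prime $p\nmid N$ is counted by $\Pi(\K_m,x)$, for $m$ squarefree, exactly when $4p-a_p^2=mf^2$ for some integer $f\ge1$, and since $m$ is squarefree this holds if and only if $m(4p-a_p^2)$ is a perfect square; hence, interchanging the order of summation,
$$
\sigma(x;u,v)=\#\left\{(p,m)\ :\ p\le x,\ p\nmid N,\ u-v\le m\le u,\ m\ \text{squarefree},\ m(4p-a_p^2)=\square\right\}.
$$
Applying the square sieve to the multiset $\{m(4p-a_p^2)\}$ --- whose elements are positive and $\ll x^2$ --- with the odd primes $\ell\in(Q,2Q]$ not dividing $N$, of cardinality $P\asymp Q/\log Q$, gives
$$
\sigma(x;u,v)\ll \frac{vx}{Q}\,x^{o(1)}+\frac{x^{o(1)}}{Q^2}\sum_{\substack{\ell_1,\ell_2\in(Q,2Q]\\ \ell_1\ne\ell_2}}\left|\sum_{(p,m)}\left(\frac{m(4p-a_p^2)}{\ell_1\ell_2}\right)\right|,
$$
with $Q$ to be optimized at the end.

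The inner sum factors, since $m$ and $p$ run over independent ranges:
$$
\sum_{(p,m)}\left(\frac{m(4p-a_p^2)}{\ell_1\ell_2}\right)=A(\ell_1\ell_2)\,B(\ell_1\ell_2),\qquad A(d):=\sum_{\substack{u-v\le m\le u\\ m\ \text{squarefree}}}\left(\frac{m}{d}\right),\quad B(d):=\sum_{\substack{p\le x\\ p\nmid N}}\left(\frac{4p-a_p^2}{d}\right),
$$
all symbols being Jacobi symbols. Up to the fixed factor $\left(\frac{-1}{d}\right)$, $B(d)$ is precisely the character sum estimated, under the GRH, in Theorem~3 of~\cite{CojDav}; applied with the squarefree modulus $d=\ell_1\ell_2\asymp Q^2$ this yields a bound $|B(\ell_1\ell_2)|\le B_0$, uniform in $\ell_1,\ell_2$, whose shape is exactly the one which, balanced against $x/Q$, produces the exponent $13/14$ of~\eqref{eq:Pi unif}. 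Pulling $B_0$ out of the sum, everything reduces to estimating $\sum_{\ell_1\ne\ell_2}|A(\ell_1\ell_2)|$.

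For general $u$ the integers $m$ lie in a window of length $v$ which may sit near $x$, so averaging over $m$ itself is useless; instead I would bound $|A(\ell_1\ell_2)|$ for each modulus by the Burgess estimate for the character $\left(\frac{\cdot}{\ell_1\ell_2}\right)$, of conductor $\asymp Q^2$, on an interval of length $v$, and then sum over the $\asymp P^2$ pairs $(\ell_1,\ell_2)$. Substituting this into the error term and balancing it against $vx/Q$, the optimal $Q$ is a small power of $vx$; carrying out the optimization --- with the second Burgess parameter equal to $2$ and the bound of Theorem~3 of~\cite{CojDav} for $B_0$ --- yields the first estimate $\sigma(x;u,v)\le(vx)^{55/59+o(1)}$. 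This Burgess step, and in particular its interaction with the bound for $B_0$ inside the double sum over the sieving primes, is the part I expect to be the main obstacle: the value $55/59$ is just what these two estimates force together.

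For the second estimate we are in the case $u=v$, so $m$ runs over the initial segment $[1,v]$ and the relevant $m$ are genuinely small; this lets one control the mean square of $A$ over the moduli rather than each $A(\ell_1\ell_2)$ pointwise. Since the pair $(\ell_1,\ell_2)$ enters $A$ only through $d=\ell_1\ell_2$, Cauchy--Schwarz over the $\asymp Q^2$ admissible products $d\le4Q^2$ together with the mean value inequality for real character sums of~\cite{HB2} (moduli up to $4Q^2$, summands of length $v$) gives
$$
\sum_{\ell_1\ne\ell_2}|A(\ell_1\ell_2)|\ll Q\left(\sum_{\substack{d\le4Q^2\\ d\ \text{squarefree}}}\left|\sum_{\substack{m\le v\\ m\ \text{squarefree}}}\left(\frac{m}{d}\right)\right|^2\right)^{1/2}\ll Q\bigl((Q^2+v)v\bigr)^{1/2}x^{o(1)}.
$$
In the relevant range $v\le Q^2$ this makes the error $\ll B_0\,v^{1/2}x^{o(1)}$, so $\sigma(x;v)\ll(vx/Q+B_0\,v^{1/2})x^{o(1)}$; balancing the two terms and using the same $B_0$ that gave~\eqref{eq:Pi unif} produces $\sigma(x;v)\le v^{13/14}x^{13/14+o(1)}$. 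In both cases one concludes by checking that the discarded contributions --- the $O(1)$ primes dividing $N$, the non-squarefree terms, the diagonal $\ell_1=\ell_2$, and the intrinsic error of the square sieve --- are negligible, and that the chosen $Q$ is admissible, in particular $Q\le x^{1/2}$ and, in the second case, $v\le Q^2$ (which holds throughout the range $v\le x^{1/13-\varepsilon}$ where the bound is nontrivial).
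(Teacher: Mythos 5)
Your proposal is correct and follows essentially the same route as the paper: reduce $\sigma(x;u,v)$ to $\sum_m S_m(x)$ over squarefree $m$, apply the square sieve, factor the resulting character sum into a sum over $p$ (bounded via Cojocaru--David's Theorem~3 under GRH) times a sum over $m$, and then use Burgess pointwise for the shifted interval and Cauchy--Schwarz with Heath-Brown's mean value estimate for the initial segment $[1,v]$. The optimization of the sieving parameter you describe is exactly what yields the exponents $55/59$ and $13/14$ in Lemmas~\ref{lem:Sum Sm long} and~\ref{lem:Sum Sm short}.
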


It is easy to check that the first bound of 
 Theorem~\ref{thm:sigma} is nontrivial and stronger 
than~\eqref{eq:CD bound} in the range
$$
 x^{3/56+\varepsilon} \le v \le x^{4/55-\varepsilon}
$$ 
for any fixed $\varepsilon > 0$ and sufficiently large $x$.

Let 
$$
\cM(x) = \{m \in \Z \ | \ \Pi(\K_m,x) > 0\}
$$
(where as before $\K_m = \Q(\sqrt{-m})$). 
An immediate implication of~\eqref{eq:Pi unif} 
is the bound
$$
\# \cM(x) \gg \frac{x^{1/14}}{(\log x)^2}.
$$
see~\cite[Corollary~4]{CojDav}.
We now observe that the first inequality 
of Theorem~\ref{thm:sigma} implies that 
for almost all primes $p\le x$ 
the corresponding   Frobenius field 
is of discriminant at least $x^{1/13 + o(1)}$.  
In particular, we have 
$$
\max_{m \in \cM(x)} m \ge x^{1/13 + o(1)}.
$$

\section{Character Sums} 

For $p\nmid N$, we put 
$$
a_p = p + 1 -  \#\E_p(\F_p),
$$
where $\#\E_p(\F_p)$ is the number of $\F_p$-rational points of $\E_p$. When $p\mid N$,
we simply put $a_p = 1$. We recall that by the {\it Hasse bound\/}, 
$|a_p| \le 2 p^{1/2}$, see~\cite{Sil}.

We recall that the size of $\Sha$ is given by 
$$
\# \Sha = \left\{\begin{array}{ll} 
s_p^2,
 &\quad\text{if $4p - a_p^2$ is odd,}\\  
s_p^2/4,
 &\quad\text{if $4p - a_p^2$ is even,}
\end{array}\right.
$$
where 
the integer $s_p$ is uniquely defined by the relation 
$4p - a_p^2 = s_p^2 r_p$ with a squarefree integer $r_p$
(clearly  $4p - a_p^2 \equiv 0,3 \pmod 4$). 
Thus, it is natural 
to use  the square sieve~\cite{HB1} to study the 
distribution of $\#\Sha$.
This requires nontrivial bounds of sums 
with the Jacobi symbols   with $4p - a_p^2$  modulo 
products $\ell_1\ell_2$ of two distinct primes. 
Accordingly, for an odd positive integer $n$ we define
$$
U(x;n) = \sum_{p \le x} 
\(\frac{a_p^2-4p}{n}\), 
$$ 
where, as usual,  $(k/n)$ denotes the Jacobi symbol of $k$ modulo 
$n$. 

The sum has been estimated by Cojocaru, Fouvry and 
Murty~\cite{CoFoMu} and then
sharpened by Cojocaru and Duke~\cite[Proposition~4.3]{CojDuke}. 
Furthermore, when $n = \ell_1\ell_2$ is a product of two 
distinct primes, which is the only relevant case 
for this paper, Cojocaru and David~\cite[Theorem~3]{CojDav} 
give a stronger bound which we present here in the following 
form:

\begin{lemma}
\label{lem:Char}
Suppose $\E$  does not have complex 
multiplication and also assume that the GRH holds.
Then for any  real $x \ge 1$ and for any 
 distinct primes $\ell_1, \ell_2 > 3$,  we have
$$
 U(x;\ell_1 \ell_2)  = \frac{1}{(\ell_1^2-1)(\ell_2^2-1)} \pi(x)  +
O\( (\ell_1 \ell_2)^3 x^{1/2}
\log(\ell_1 \ell_2 x)\).
$$
\end{lemma}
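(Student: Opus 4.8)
The plan is to read every Jacobi symbol occurring in $U(x;\ell_1\ell_2)$ as a class function on the Galois group of an appropriate division field of $\E$, and then to evaluate the resulting sum by the effective Chebotarev density theorem under GRH. The one input that goes beyond the earlier arguments of~\cite{CoFoMu,CojDuke} is the observation that this class function descends to the $\mathrm{PGL}_2$-quotient of the $\ell$-division field, so that the Chebotarev estimate can be run in a field of degree $\ll(\ell_1\ell_2)^3$ rather than in $\Q(\E[\ell_1\ell_2])$, whose degree is of size $(\ell_1\ell_2)^4$; this is exactly what produces the exponent $3$ in the error term.

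First I would split the Jacobi symbol by multiplicativity, $\left(\frac{a_p^2-4p}{\ell_1\ell_2}\right)=\left(\frac{a_p^2-4p}{\ell_1}\right)\left(\frac{a_p^2-4p}{\ell_2}\right)$, and set aside the $O(\log(\ell_1\ell_2))$ primes dividing $N\ell_1\ell_2$. For $p\nmid N\ell_1\ell_2$, the residues of $a_p$ and $p$ modulo a prime $\ell$ are the trace and the determinant of the Frobenius substitution $\mathrm{Frob}_p\in\mathrm{Gal}(\Q(\E[\ell])/\Q)\subseteq\mathrm{GL}_2(\F_\ell)$, so $\left(\frac{a_p^2-4p}{\ell}\right)=\chi_\ell(\mathrm{Frob}_p)$ where $\chi_\ell(g)=\left(\frac{(\mathrm{tr}\,g)^2-4\det g}{\ell}\right)$. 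This $\chi_\ell$ is conjugation invariant and equals $+1$, $-1$ or $0$ according as $g$ has distinct eigenvalues in $\F_\ell$, a conjugate pair of eigenvalues in $\F_{\ell^2}\setminus\F_\ell$, or a repeated eigenvalue. Since $(\mathrm{tr}(cg))^2-4\det(cg)=c^2\bigl((\mathrm{tr}\,g)^2-4\det g\bigr)$ for a scalar matrix $c$, the function $\chi_\ell$ is trivial on the scalars, hence factors through $\mathrm{GL}_2(\F_\ell)\twoheadrightarrow\mathrm{PGL}_2(\F_\ell)$. Accordingly, let $L$ be the compositum, over $i\in\{1,2\}$, of the subfield of $\Q(\E[\ell_i])$ fixed by the scalar matrices contained in $\mathrm{Gal}(\Q(\E[\ell_i])/\Q)$. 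Then $G:=\mathrm{Gal}(L/\Q)$ embeds into $\mathrm{PGL}_2(\F_{\ell_1})\times\mathrm{PGL}_2(\F_{\ell_2})$, whence $[L:\Q]\le\ell_1(\ell_1^2-1)\ell_2(\ell_2^2-1)<(\ell_1\ell_2)^3$; the product $\Phi:=\chi_{\ell_1}\chi_{\ell_2}$ descends to a class function on $G$ with $|\Phi|\le1$, and $\left(\frac{a_p^2-4p}{\ell_1\ell_2}\right)=\Phi(\mathrm{Frob}_p)$ for $p\nmid N\ell_1\ell_2$.

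Next I would apply the effective Chebotarev density theorem. The field $L$ is unramified away from the primes dividing $N\ell_1\ell_2$, so the conductor--discriminant estimate gives $\log d_L\ll[L:\Q]\log(\ell_1\ell_2)\ll(\ell_1\ell_2)^3\log(\ell_1\ell_2)$, the implied constant depending on $N$. Feeding this and the degree bound into the GRH-conditional effective Chebotarev theorem (Lagarias--Odlyzko, Serre) for each conjugacy class $C$ of $G$, then summing over $C$ with weights $\Phi(C)$ — using $\sum_C|C|=|G|$, $|\Phi|\le1$ and $\pi(x)=\mathrm{Li}(x)+O(x^{1/2}\log x)$ — yields
$$
U(x;\ell_1\ell_2)=\Bigl(\frac1{|G|}\sum_{g\in G}\Phi(g)\Bigr)\pi(x)+O\bigl(x^{1/2}(\log d_L+[L:\Q]\log x)\bigr),
$$
and the error is $\ll(\ell_1\ell_2)^3x^{1/2}\log(\ell_1\ell_2x)$. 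It then remains to compute the average $M:=|G|^{-1}\sum_{g\in G}\Phi(g)$. Since $\chi_{\ell_i}$ factors through $\mathrm{PGL}_2(\F_{\ell_i})$ and $\Phi$ is the external product $\chi_{\ell_1}\otimes\chi_{\ell_2}$, and since for $\E$ without complex multiplication Serre's open image theorem — together with the near-independence of the mod-$\ell$ representations at distinct primes and the surjectivity of the mod-$\ell$ cyclotomic character — identifies $G$ with $\mathrm{PGL}_2(\F_{\ell_1})\times\mathrm{PGL}_2(\F_{\ell_2})$ outside a finite set of primes depending on $\E$, one gets $M=M_{\ell_1}M_{\ell_2}$ with $M_\ell=|\mathrm{GL}_2(\F_\ell)|^{-1}\sum_g\chi_\ell(g)$. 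Classifying the conjugacy classes of $\mathrm{GL}_2(\F_\ell)$ by the type of the characteristic polynomial (split semisimple, anisotropic, or with a repeated root) gives $\sum_g\chi_\ell(g)=-\ell(\ell-1)$, so $M_\ell=-1/(\ell^2-1)$ and $M=1/\bigl((\ell_1^2-1)(\ell_2^2-1)\bigr)$.

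The substantive point is the descent to the $\mathrm{PGL}_2$-quotient together with the discriminant bound for $L$: they keep $[L:\Q]$ and $\log d_L$ of size $(\ell_1\ell_2)^3$, and, because $\sum_C|C|=|G|\ll(\ell_1\ell_2)^3$, the per-class Chebotarev errors aggregate without loss, which is what makes the exponent $3$ rather than $4$. The evaluation of $M$ is then an elementary finite-group computation; the only loose end is the handling of the finitely many exceptional primes $\ell$ at which the mod-$\ell$ image of Galois is not all of $\mathrm{PGL}_2(\F_\ell)$, which must be treated on their own.
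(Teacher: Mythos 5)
The paper does not prove this lemma at all: it is quoted from Cojocaru and David~\cite[Theorem~3]{CojDav} and merely restated ``in the following form'', so your proposal is a reconstruction of the cited source's argument rather than an alternative to anything in this paper. As such it is essentially correct and follows the same route as~\cite{CojDav}: interpreting the Jacobi symbol as a class function on $\mathrm{Gal}(\Q(\E[\ell_1\ell_2])/\Q)$, noting its invariance under scalars so that the effective Chebotarev theorem can be run in a field of degree $\ll(\ell_1\ell_2)^3$ (this descent is indeed what gives the exponent $3$), summing the per-class GRH error terms using $\sum_C|C|/|G|=1$ together with the conductor--discriminant bound $\log d_L\ll_N (\ell_1\ell_2)^3\log(\ell_1\ell_2)$, and computing the mean of $\chi_\ell$ over $\mathrm{GL}_2(\F_\ell)$ via the split/anisotropic/non-semisimple trichotomy; your value $\sum_g\chi_\ell(g)=-\ell(\ell-1)$, hence $M_\ell=-1/(\ell^2-1)$, checks out, and Goursat plus the simplicity of $\mathrm{PSL}_2(\F_{\ell_i})$ justifies the factorization $M=M_{\ell_1}M_{\ell_2}$ for non-exceptional primes. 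The loose end you flag is a genuine one for the lemma \emph{as stated}: for the finitely many primes $\ell>3$ at which the mod-$\ell$ image is not all of $\mathrm{GL}_2(\F_\ell)$, the density is the average of $\Phi$ over the actual image, which need not equal $1/\bigl((\ell_1^2-1)(\ell_2^2-1)\bigr)$, and the resulting discrepancy in the main term (of order $\pi(x)/\ell_2^2$ for a fixed exceptional $\ell_1$) is not absorbed by an error term of size $(\ell_1\ell_2)^3x^{1/2}\log(\ell_1\ell_2x)$ when $x$ is large; those primes must either be excluded or treated with their own constant. This does not affect the present paper, which only applies the lemma with $\ell_1,\ell_2\in[z,2z]$ and $z\ge(\log u)^2\to\infty$, so the exceptional primes never occur.
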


We also need the following special case 
of the   classical {\it
Burgess\/} bound, see~\cite[Theorems~12.5]{IwKow} taken with $r=2$. 

\begin{lemma}
\label{lem:Burg}
For any real $u \ge  v \ge 1$  and an
odd square-free integer $s$, 
$$
  \sum_{ u-v \le m \le u} 
\(\frac{m}{s}\)\ll v^{1/2}s^{3/16 + o(1)} .
$$ 
\end{lemma}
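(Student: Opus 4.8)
The plan is to recognize the summand as a Dirichlet character and invoke the Burgess bound. We may assume $s>1$, since $s=1$ is the only excluded case and it plays no role in any application of the lemma. First I would check that, for an odd squarefree $s>1$ with prime factorization $s=p_1\cdots p_k$, the Jacobi symbol
$$
\chi_s(m)=\left(\frac{m}{s}\right)=\prod_{i=1}^{k}\left(\frac{m}{p_i}\right)
$$
is a primitive Dirichlet character modulo $s$: each factor $\left(\frac{\cdot}{p_i}\right)$ is the primitive, real, non-principal Legendre character modulo $p_i$, the moduli $p_i$ are pairwise coprime, and the product of primitive characters to pairwise coprime moduli is primitive to the product modulus. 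In particular $\chi_s$ is non-principal modulo $s$.

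Next I would quote the Burgess bound in the form of \cite[Theorem~12.5]{IwKow} with the exponent parameter $r=2$: for a primitive character $\chi$ modulo $q>1$ and any real $M$ and $N\ge 1$,
$$
\sum_{M<n\le M+N}\chi(n)\ \ll\ N^{\,1-1/r}\,q^{(r+1)/(4r^{2})+o(1)}\ =\ N^{1/2}\,q^{3/16+o(1)},
$$
where the $o(1)$ absorbs the customary $q^{\varepsilon}$ and any logarithmic factors; the admissibility of $r=2$ for an unrestricted modulus is classical, and in any case $s$ is squarefree, hence cube-free. Applying this with $\chi=\chi_s$, $q=s$, and the range $u-v\le m\le u$, which is an interval of length $v+O(1)\ll v$ (recall $v\ge 1$, and the $O(1)$ boundary terms are harmless), gives
$$
\sum_{u-v\le m\le u}\left(\frac{m}{s}\right)\ \ll\ v^{1/2}\,s^{3/16+o(1)},
$$
as claimed.

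The only thing to watch — bookkeeping rather than a genuine obstacle — is matching hypotheses: that $\chi_s$ really is a non-principal (indeed primitive) character modulo $s$, that $r=2$ is an allowed Burgess exponent for the modulus at hand, and that the uniformity in $s$ and $v$ concealed in the $o(1)$ is precisely what \cite[Theorem~12.5]{IwKow} provides. Should a self-contained argument be preferred, one could instead run the standard Burgess moment method: restrict to short subintervals, raise to the fourth power, expand the resulting sum, and estimate the complete character sums $\sum_{x\bmod s}\left(\frac{(x+a_{1})(x+a_{2})(x+b_{1})(x+b_{2})}{s}\right)$ via the Chinese Remainder Theorem together with Weil's bound over each $\F_{p_i}$. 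Invoking Burgess directly is, however, both cleaner and entirely sufficient here.
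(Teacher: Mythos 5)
Your proposal is correct and matches the paper exactly: the paper gives no separate proof of this lemma, presenting it as a direct special case of the Burgess bound in \cite[Theorem~12.5]{IwKow} with $r=2$, which is precisely the reduction you carry out (with the useful extra verification that $(\cdot/s)$ is primitive modulo squarefree odd $s>1$).
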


As we have mentioned, a  part of 
our improvement of~\eqref{eq:CD error 1} and~\eqref{eq:CD error 2}
 comes from  bringing into the argument of~\cite{CojDuke}
the  following result of Heath-Brown~\cite{HB2}.

\begin{lemma}
\label{lem:HB}
For any real positive $X$ and $Y$ 
with $XY\to \infty$ and complex-valued function $f(m)$,
$$
   \sum_{\substack{s \le Y\\ s~\text{odd squarefree}}}
\left | \sum_{\substack{m \le X\\ m~\text{squarefree}}}
 f(m)
\(\frac{m}{s}\)\right|^2 \le (XY)^{o(1)}(X+Y)  
 \sum_{1 \le m \le X}  |f(m)|^2.
$$
\end{lemma}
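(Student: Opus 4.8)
The plan is to follow the argument of Heath-Brown~\cite{HB2}. The estimate refines the large sieve inequality: since the quadratic characters attached to moduli $s \le Y$ lie among the primitive Dirichlet characters of conductor $\ll Y$, the classical large sieve already gives the weaker bound $(XY)^{o(1)}(Y^2 + X)\sum_{m \le X}|f(m)|^2$, and the point is to replace $Y^2$ by $Y$ by exploiting the special structure of quadratic characters. First I would open the square and interchange summations, using that the Jacobi symbol is real and multiplicative in its numerator:
$$
\sum_{\substack{s \le Y\\ s~\text{odd squarefree}}}\left|\sum_{\substack{m \le X\\ m~\text{squarefree}}} f(m)\(\frac{m}{s}\)\right|^2 = \sum_{\substack{m_1, m_2 \le X\\ m_1, m_2~\text{squarefree}}} f(m_1)\overline{f(m_2)} \sum_{\substack{s \le Y\\ s~\text{odd squarefree}}} \(\frac{m_1 m_2}{s}\).
$$
On the diagonal $m_1 = m_2$ one has $\(\frac{m^2}{s}\) = 1$ whenever $\gcd(m,s) = 1$, so these terms contribute at most $Y\sum_{m \le X}|f(m)|^2$, which already accounts for the term $Y$ in the bound. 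For the off-diagonal terms, writing $d = \gcd(m_1, m_2)$ and $m_i = d e_i$, the inner sum over $s$ reduces (after removing the condition $\gcd(s,d) = 1$ by M\"obius) to a sum of $\(\frac{e_1 e_2}{s}\)$, and $k := e_1 e_2$ is a squarefree integer exceeding $1$, hence a non-square. So everything comes down to a bound, on average over such $k$, for the character sum
$$
T(k) = \sum_{\substack{s \le Y\\ s~\text{odd squarefree}}} \(\frac{k}{s}\).
$$

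Next I would remove the squarefree condition on $s$ via $\mu^2(s) = \sum_{b^2 \mid s}\mu(b)$ and apply quadratic reciprocity (together with the supplementary laws at the prime $2$), which exhibits $s \mapsto \(\frac{k}{s}\)$ as a non-principal real Dirichlet character to a modulus dividing $8k$. On its own, the P\'olya--Vinogradov estimate only gives $T(k) \ll k^{1/2 + o(1)} Y^{1/2}$, which after Cauchy--Schwarz over $m_1, m_2$ is too weak to recover the term $X$. The decisive step---and the hard part---is to apply Poisson summation to the sum over $s$ in residue classes to the relevant modulus, evaluate the ensuing quadratic Gauss sums for squarefree moduli, and invoke reciprocity a second time: this turns $T(k)$ into a Jacobi-symbol sum over a dual variable of length roughly $k/Y$, at the cost of a factor essentially $Y^{-1/2}$, and reproduces a bilinear sum of the same shape with the sizes of the two ranges altered. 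Feeding this back into the bilinear form and exploiting its near self-duality under reciprocity, I would set up a recursion bounding the mean value for the ranges $(X,Y)$ by the same quantity for a pair of ranges with strictly smaller product, and iterate until the ranges are bounded.

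The main obstacle is to make this recursion close. One has to keep careful track of the terms with $\gcd(h,s) > 1$ arising in the Gauss-sum evaluation and of the coprimality conditions produced when the various squarefree restrictions are removed, and to check that at each stage these contribute only to lower-order terms, so that after $O(\log(XY))$ iterations the accumulated losses amount to no more than a factor $(XY)^{o(1)}$ and the exponents of $X$ and $Y$ stay linear. Granting this, combining the off-diagonal estimate with the diagonal contribution $Y\sum_{m \le X}|f(m)|^2$ and a concluding Cauchy--Schwarz in $m_1, m_2$ gives
$$
\sum_{\substack{s \le Y\\ s~\text{odd squarefree}}}\left|\sum_{\substack{m \le X\\ m~\text{squarefree}}} f(m)\(\frac{m}{s}\)\right|^2 \le (XY)^{o(1)}(X + Y)\sum_{1 \le m \le X}|f(m)|^2,
$$
which is the assertion of the lemma.
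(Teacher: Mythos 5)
This lemma is not proved in the paper at all: it is Heath-Brown's mean value theorem for real character sums, quoted verbatim from~\cite{HB2} (it is essentially Corollary~2 of that paper), and the text offers nothing beyond the citation. So there is no in-paper argument to compare yours against; the only meaningful question is whether your sketch would reconstitute Heath-Brown's proof. It would not, as it stands. Your opening moves are sound and do match the architecture of~\cite{HB2}: expanding the square, isolating the diagonal $m_1=m_2$ (which correctly yields the term $Y\sum|f(m)|^2$), and reducing the off-diagonal contribution to the average over non-square squarefree $k=e_1e_2$ of sums $T(k)=\sum_s(k/s)$. You are also right that P\'olya--Vinogradov applied pointwise to $T(k)$ is hopeless here, and that the real mechanism is a reciprocity-plus-Poisson duality applied to the \emph{bilinear} form, producing a sum of the same shape with altered ranges and leading to an iteration.

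But everything after that is deferred with ``granting this,'' and what you are granting is the entire content of the theorem. Closing the recursion is not a bookkeeping exercise: one must show that each dualization genuinely decreases a suitable measure of the ranges while the error terms (the $\gcd(h,s)>1$ contributions from the Gauss sums, the M\"obius removals of squarefree and coprimality conditions, the smoothing needed to apply Poisson at all) re-enter as bilinear forms of the \emph{same} kind to which the induction hypothesis applies, and that the implied constants compound to only $(XY)^{o(1)}$ over $O(\log XY)$ steps. Heath-Brown needs some forty pages and a carefully chosen induction quantity to do this; nothing in your outline identifies that quantity or verifies that the dual ranges shrink rather than oscillate. As a description of where the proof lives and why it is hard, your proposal is accurate; as a proof, it has a genuine gap exactly at the step you flag as ``the main obstacle.'' For the purposes of this paper the honest course is the one the author takes: cite~\cite{HB2} and do not attempt to reprove it.
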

 
\section{Square Multiples and Divisors of $4p-a_p^2$}

As in~\cite{CojDuke}, we define
$$
S_m(x) = \# \{p\le x\ | \   m(4p-a_p^2)  \text{ is a square}\}.
$$ 

\begin{lemma}
\label{lem:Sum Sm long}
Suppose $\E$  does not have complex 
multiplication and also assume that the GRH holds.
Then for any  real   $4x \ge u \ge v \ge 1$,  we have
$$
\sum_{\substack{u-v \le m \le u\\m~\text{squarefree}}}  S_m(x) \le 
v^{55/59}x^{55/59+o(1)}.
$$
\end{lemma}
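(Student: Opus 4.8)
The plan is to establish the formally stronger bound in which $m$ ranges over \emph{all} integers of the interval. Since $S_m(x)\ge 0$, the sum in the lemma is at most
$$
\Sigma:=\sum_{\substack{u-v\le m\le u\\ m\ge 1}} S_m(x)=\#\left\{(p,m):\ p\le x,\ u-v\le m\le u,\ m\ge1,\ m(4p-a_p^2)\text{ is a square}\right\},
$$
so it suffices to bound $\Sigma$, and passing to all $m$ rather than only the squarefree ones will dispense with a Möbius inversion later. The new element is to apply the square sieve of Heath-Brown~\cite{HB1} not to the individual functions $S_m(x)$ — which would only multiply the bound for a single $S_m(x)$ by $v$ — but once, to the \emph{pooled} family of the positive integers $m(4p-a_p^2)$ indexed by the pairs $(p,m)$ above. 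Choose a parameter $Q$ (eventually a small power of $x$) and let $\mathcal P$ be the set of primes in $(Q,2Q]$, so that $P:=\#\mathcal P\gg Q/\log Q$; every value $m(4p-a_p^2)$ is a positive integer at most $16x^2$ (as $0<4p-a_p^2\le 4p$ and $m\le u\le 4x$), hence divisible by $O(\log x/\log Q)=o(P)$ primes from $\mathcal P$, so the square sieve applies and gives
$$
\Sigma\ \ll\ \frac{v\,\pi(x)}{P}\ +\ \frac1{P^{2}}\sum_{\substack{\ell_1,\ell_2\in\mathcal P\\ \ell_1\ne\ell_2}}\left|\,\sum_{p\le x}\ \sum_{\substack{u-v\le m\le u\\ m\ge1}}\left(\frac{m(4p-a_p^2)}{\ell_1\ell_2}\right)\right|.
$$

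The key observation is that for a product $s=\ell_1\ell_2$ of two distinct primes the inner double sum factors into a product of a sum over $p$ and a sum over $m$, which are estimated independently. For the $p$-sum, writing $\left(\frac{4p-a_p^2}{s}\right)=\left(\frac{-1}{s}\right)\left(\frac{a_p^2-4p}{s}\right)$ and invoking Lemma~\ref{lem:Char} (legitimate as $\ell_1,\ell_2>3$ for $x$ large), its absolute value is $|U(x;\ell_1\ell_2)|\ll xQ^{-4}+Q^{6}x^{1/2+o(1)}$, uniformly in $\ell_1,\ell_2\in\mathcal P$. For the $m$-sum — the place where averaging over $m$ pays off — $s$ is odd and squarefree, so Lemma~\ref{lem:Burg} together with the trivial estimate gives
$$
\left|\,\sum_{\substack{u-v\le m\le u\\ m\ge1}}\left(\frac{m}{s}\right)\right|\ \ll\ \min\left(v,\ v^{1/2}(\ell_1\ell_2)^{3/16+o(1)}\right)\ \ll\ \min\left(v,\ v^{1/2}Q^{3/8+o(1)}\right).
$$
Had we instead summed the single-$S_m(x)$ estimates, only the trivial bound $v$ on this character sum would have been available. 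Multiplying these two bounds, summing over the at most $P^{2}$ pairs $(\ell_1,\ell_2)$ and using $P\gg Q/\log Q$, I would obtain
$$
\Sigma\ \ll\ x^{o(1)}\left(\frac{vx}{Q}\ +\ \left(xQ^{-4}+Q^{6}x^{1/2}\right)\min\left(v,\ v^{1/2}Q^{3/8}\right)\right).
$$

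Finally I would choose $Q$ to balance the two dominant contributions $vx/Q$ and $Q^{6}x^{1/2}\cdot v^{1/2}Q^{3/8}=v^{1/2}x^{1/2}Q^{59/8}$, namely $Q=(vx)^{4/59}$, making both equal to $(vx)^{55/59}$. The remaining terms are then no larger: $vx/Q^{4}=(vx)^{43/59}$; the cross term $xQ^{-4}\cdot v^{1/2}Q^{3/8}=v^{1/2}xQ^{-29/8}$ is $\le(vx)^{55/59}$; and in the regime where $\min(v,v^{1/2}Q^{3/8})=v$ the condition $v\le Q^{3/4}$ is (for $Q=(vx)^{4/59}$) equivalent to $v\le x^{3/56}$, which is exactly what makes $Q^{6}x^{1/2}\cdot v\le(vx)^{55/59}$. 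This gives $\Sigma\ll(vx)^{55/59+o(1)}=v^{55/59}x^{55/59+o(1)}$, the asserted bound; for bounded $x$ the inequality holds trivially after enlarging the implied constant. The step I expect to be decisive is the first one — recognising that the square sieve should be set up for the pooled family $\{m(4p-a_p^2)\}_{(p,m)}$, so that after the product splits the $m$-aspect is a genuine incomplete Jacobi-symbol sum amenable to Lemma~\ref{lem:Burg}; the only other point requiring care is the routine check that the primes of $\mathcal P$ dividing $m(4p-a_p^2)$ incur negligible loss (valid because these values are at most $16x^2$ while $Q$ is a small power of $x$), everything else being bookkeeping, and the exponent $55/59$ is then forced by the balancing $Q=(vx)^{4/59}$.
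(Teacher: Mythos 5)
Your proposal is correct and, despite the ``pooled family'' framing, is arithmetically identical to the paper's argument: invoking the square sieve once on $\{m(4p-a_p^2)\}_{(p,m)}$ is exactly the same as summing the paper's per-$m$ sieve bound (their bound~(37) from Cojocaru--Duke) over $m$, and thereafter both proofs factor the Jacobi symbol into a $p$-part handled by Lemma~\ref{lem:Char} and an $m$-part handled by Burgess (Lemma~\ref{lem:Burg}), balancing at $Q\asymp z\asymp(vx)^{4/59}$ to reach $(vx)^{55/59+o(1)}$.
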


\begin{proof} 
Fix some 
\begin{equation}
\label{eq:large z}
z \ge (\log u)^2
\end{equation}  
and assume that $x$ is  sufficiently large. 
Then by~\cite[Bound~(37)]{CojDuke} we
have
\begin{equation}
\label{eq:Sm}
S_m(x) \ll  \frac{1}{\pi(z)^2} \sum_{n \le 4 x^2}
w_m(n) \(\sum_{z \le \ell \le 2z}\(\frac{n}{\ell}\)\)^2, 
\end{equation} 
where the inner sum is taken over all primes $\ell \in [z, 2z]$ and 
$$
w_m(n) = \# \{p\le x\ | \   m(4p-a_p^2)=n\}. 
$$
We now derive
\begin{eqnarray*}  
\sum_{\substack{u-v \le m \le u\\m~\text{squarefree}}} S_m(x) &\le & 
\sum_{ u-v \le m \le u} S_m(x) \\
&\le & 
\frac{1}{\pi(z)^2}  
\sum_{ u-v \le m \le u}\sum_{n \le 4 x^2}
w_m(n) \(\sum_{z \le \ell \le 2z}\(\frac{n}{\ell}\)\)^2\\
 & =  &      \frac{1}{\pi(z)^2} \sum_{ u-v \le m \le u} \sum_{z \le
\ell_1, \ell_2\le 2z}\sum_{n \le 4 x^2}
w_m(n)\(\frac{n}{\ell_1\ell_2}\).
\end{eqnarray*}  
Separating $\pi(z)$ diagonal terms with $\ell_1  = \ell_2$, we obtain
\begin{equation}
\label{eq:Sm and Sigmas}
\sum_{ u-v \le m \le u}S_m(x) \ll    
\frac{1}{\pi(z)} \Sigma_1 +
 \frac{1}{\pi(z)^2}\Sigma_2,
\end{equation} 
where
\begin{eqnarray*}  
\Sigma_1 & = &   \sum_{ u-v \le m \le u} \sum_{n \le 4 x^2} w_m(n),\\
\Sigma_2 & = &  \sum_{ u-v \le m \le u}  \sum_{n \le 4 x^2}
\sum_{z \le \ell_1 < \ell_2\le 2z}  w_m(n)\(\frac{n}{\ell_1\ell_2}\).
\end{eqnarray*} 

We estimate the first sums trivially as
\begin{equation}
\label{eq:Sigma1}
\Sigma_1  \le     \sum_{ u-v \le m \le u} \pi(x) \le v \pi(x).
\end{equation} 

For the second sum, we note that
\begin{eqnarray*} 
 \sum_{n \le 4 x^2} w_m(n)\(\frac{n}{\ell_1\ell_2}\)
& = &\sum_{p \le x}
 \(\frac{m(4p - a_p^2)}{\ell_1\ell_2}\)\\
& = &\(\frac{-m}{\ell_1\ell_2}\) 
   \sum_{p \le x}
 \(\frac{a_p^2-4p}{\ell_1\ell_2}\)
=\(\frac{-m}{\ell_1\ell_2}\) U(x;\ell_1\ell_2) .
\end{eqnarray*}  
Thus, 
changing the order of summation, we derive
$$
\Sigma_2  = \sum_{z \le \ell_1 < \ell_2\le
2z} \sum_{ u-v \le m \le u} \(\frac{-m}{\ell_1\ell_2}\)
U(x;\ell_1\ell_2) .
$$ 
By Lemma~\ref{lem:Char}, we have 
$$ 
U(x;\ell_1\ell_2) \ll x^{1 + o(1)} (\ell_1\ell_2)^{-2} 
+ x^{1/2 + o(1)} (\ell_1\ell_2)^3 = x^{1 + o(1)} z^{-4} + x^{1/2 +
o(1)} z^6 , 
$$  
which yields the the estimate
\begin{equation}
\label{eq:Sigma2 Prelim}
\Sigma_2  \le \(x^{1 + o(1)} z^{-4} + x^{1/2 + o(1)} z^6\)  \sum_{z \le
\ell_1 < \ell_2\le 2z} \left|\sum_{ u-v \le m \le u}
\(\frac{m}{\ell_1\ell_2}\) \right| .
\end{equation}

We now apply Lemma~\ref{lem:Burg}  to derive 
from~\eqref{eq:Sigma2 Prelim} that 
\begin{equation}
\label{eq:Sigma2 Burg}
\Sigma_2  \le x^{o(1)}\(xz^{-4} + x^{1/2} z^6\) v^{1/2}z^{19/8}.
\end{equation}

Substitution of~\eqref{eq:Sigma1} and~\eqref{eq:Sigma2 Burg} 
in~\eqref{eq:Sm and Sigmas} 
leads us to the bound: 
\begin{eqnarray*}
\sum_{\substack{u-v \le m \le u\\m~\text{squarefree}}} S_m(x) &\le &
x^{o(1)}\(x v   z^{-1} +   v^{1/2}xz^{-29/8} + v^{1/2}x^{1/2}
z^{51/8}\)\\
&\le &  x^{o(1)}\( v x z^{-1} +  v^{1/2} x^{1/2} z^{51/8}\)  . 
\end{eqnarray*} 
Choosing 
$$
z =  (vx)^{4/59}
$$
(thus~\eqref{eq:large z} holds),
we conclude the proof. 
\end{proof}

For any fixed $\varepsilon> 0$, 
Lemma~\ref{lem:Sum Sm long} gives a nontrivial
estimate provided that  $ v \le x^{4/55-\varepsilon}$
uniformly over $u$. 

In the case of $u = v$, we now obtain a slightly better bound. 

\begin{lemma}
\label{lem:Sum Sm short}
Suppose $\E$  does not have complex 
multiplication and also assume that the GRH holds.
Then for any  real   $4x \ge v \ge 1$,  we have
$$
\sum_{\substack{1 \le m \le v\\m~\text{squarefree}}}  S_m(x) \le 
v^{13/14} x^{13/14+o(1)} .
$$
\end{lemma}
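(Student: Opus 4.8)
The approach is to rerun the square-sieve argument from the proof of Lemma~\ref{lem:Sum Sm long}, but to feed the character sums over $m$ into the Heath--Brown large sieve inequality (Lemma~\ref{lem:HB}) in place of the Burgess bound (Lemma~\ref{lem:Burg}); this becomes legitimate here because $m$ runs over the \emph{full} interval $[1,v]$. I would begin with a cheap observation: for every prime $p\le x$ there is exactly one squarefree $m$ for which $m(4p-a_p^2)$ is a perfect square, namely the squarefree part $r_p$ of $4p-a_p^2$; hence
$$
\sum_{\substack{1\le m\le v\\ m~\text{sf}}}S_m(x)=\#\{p\le x\ |\ r_p\le v\}\le\pi(x)\le x^{1+o(1)},
$$
and this already yields the claimed bound whenever $v\ge x^{1/13}$ (since then $x^{1/14}\le v^{13/14}$). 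So one may assume $v<x^{1/13}$, and set $z=(vx)^{1/14}$; then~\eqref{eq:large z} holds for large $x$ and, crucially, $v<z^2$.

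Keeping the restriction that $m$ be squarefree throughout, I would apply~\cite[Bound~(37)]{CojDuke}, expand the square, and peel off the $\pi(z)$ diagonal terms $\ell_1=\ell_2$ exactly as in the proof of Lemma~\ref{lem:Sum Sm long}, reducing the task to bounding
$$
\Sigma_1=\sum_{\substack{1\le m\le v\\ m~\text{sf}}}\ \sum_{n\le 4x^2}w_m(n)\le v\,\pi(x)
\mand
\Sigma_2=\sum_{z\le\ell_1<\ell_2\le 2z}\ \sum_{\substack{1\le m\le v\\ m~\text{sf}}}\left(\frac{-m}{\ell_1\ell_2}\right)U(x;\ell_1\ell_2),
$$
using, as before, $\sum_n w_m(n)(n/\ell_1\ell_2)=(-m/\ell_1\ell_2)\,U(x;\ell_1\ell_2)$. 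For $\Sigma_2$ I would extract the uniform estimate $U(x;\ell_1\ell_2)\ll x^{1+o(1)}z^{-4}+x^{1/2+o(1)}z^{6}$ furnished by Lemma~\ref{lem:Char}, and then invoke Cauchy--Schwarz to peel this weight off from the residual sums over $m$: with $P\le z^{2+o(1)}$ the number of admissible pairs $(\ell_1,\ell_2)$,
$$
|\Sigma_2|\le\left(x^{1+o(1)}z^{-4}+x^{1/2+o(1)}z^{6}\right)P^{1/2}\left(\sum_{z\le\ell_1<\ell_2\le 2z}\left|\sum_{\substack{m\le v\\ m~\text{sf}}}\left(\frac{m}{\ell_1\ell_2}\right)\right|^{2}\right)^{1/2}.
$$
Since each $\ell_1\ell_2$ is odd, squarefree and at most $4z^2$, the inner double sum does not exceed the full sum over odd squarefree $s\le 4z^2$, to which Lemma~\ref{lem:HB} applies with $f\equiv 1$, $X=v$, $Y=4z^2$, giving the bound $(vz)^{o(1)}(v+z^2)v=(vz)^{o(1)}z^2v$ in our regime $v<z^2$. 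Substituting, dividing by $\pi(z)^2$, and adding the contribution $\pi(z)^{-1}\Sigma_1\le vx^{1+o(1)}z^{-1}$ leads to
$$
\sum_{\substack{1\le m\le v\\ m~\text{sf}}}S_m(x)\le(vx)^{o(1)}\left(vxz^{-1}+xv^{1/2}z^{-4}+x^{1/2}v^{1/2}z^{6}\right).
$$
The middle term is dominated by the first, and with $z=(vx)^{1/14}$ the first and last terms both equal $(vx)^{13/14}=v^{13/14}x^{13/14}$, which finishes the argument.

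The step I expect to require the most care is ensuring that Lemma~\ref{lem:HB} is applied in its favourable range: one has to carry the squarefree condition on $m$ from the very start so that the lemma applies verbatim, and one has to check that in the remaining range $v<x^{1/13}$ the chosen $z=(vx)^{1/14}$ really does satisfy $v<z^2$, so that it is the ``number of moduli'' term $z^2$ in Lemma~\ref{lem:HB} --- not the ``length'' term $v$ --- that survives; this is precisely the gain over the Burgess input used in Lemma~\ref{lem:Sum Sm long}. The only genuine ceiling on how large $z$ can be taken comes from the error term $x^{1/2+o(1)}z^{6}$ in the conditional bound of Lemma~\ref{lem:Char}, which is what pins down $z=(vx)^{1/14}$; everything else is the same bookkeeping as in the proof of Lemma~\ref{lem:Sum Sm long}.
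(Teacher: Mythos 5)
Your proof is correct and follows essentially the same route as the paper: the square-sieve expansion with diagonal terms removed, Lemma~\ref{lem:Char} for $U(x;\ell_1\ell_2)$, Cauchy--Schwarz over the pairs $(\ell_1,\ell_2)$, and Lemma~\ref{lem:HB} with $X=v$, $Y=4z^2$, $f\equiv 1$, culminating in the same choice $z=(vx)^{1/14}$. Your preliminary reduction to $v<x^{1/13}$ (via the observation that each prime contributes to exactly one squarefree $m$, so the left side is at most $\pi(x)$) is a slightly cleaner way to dispose of the secondary term $v^{19/14}x^{6/7}$, which the paper instead handles by the closing remark that the first term dominates whenever the bound is nontrivial.
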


\begin{proof} We proceed as in the proof of 
Lemma~\ref{lem:Sum Sm long}, however, we always preserve the
condition that $m$ is square-free. Then we  can 
estimate $\Sigma_2$  by using Lemma~\ref{lem:HB}
instead of Lemma~\ref{lem:Burg}.

More precisely, 
applying the Cauchy inequality and then using 
Lemma~\ref{lem:HB} with 
$X =v$,  $Y =  4z^2 $ and $f(m)=1$, we obtain
\begin{eqnarray*}
\lefteqn{\sum_{z \le \ell_1 < \ell_2\le
2z} \left|\sum_{\substack{1 \le m \le v\\m~\text{squarefree}}}
\(\frac{m}{\ell_1\ell_2}\) \right|    \ll  \(z^2 \sum_{z \le \ell_1 <
\ell_2\le 2z} \left|\sum_{\substack{1 \le m \le
v\\m~\text{squarefree}}} \(\frac{m}{\ell_1\ell_2}\)
\right|^2\)^{1/2}  } \\
  && \qquad \qquad \qquad \qquad \qquad \le  \(x^{o(1)} vz^2\(v+z^2\)
\)^{1/2}= x^{ o(1)}\(vz + v^{1/2}z^{2}\).
\end{eqnarray*} 

We now derive from~\eqref{eq:Sigma2 Prelim} that 
\begin{equation}
\label{eq:Sigma2 HB}
\Sigma_2 \le  
 x^{o(1)}\(vxz^{-3} + v^{1/2} x z^{-2} + x^{1/2}vz^{7} +
v^{1/2}x^{1/2}z^{8}\).
\end{equation}

Substitution of~\eqref{eq:Sigma1} and~\eqref{eq:Sigma2 HB} 
in~\eqref{eq:Sm and Sigmas} 
leads us to the bound:
$$
\sum_{\substack{1 \le m \le v\\m~\text{squarefree}}} S_m(x) \le
x^{o(1)}\(vxz^{-1} +  vxz^{-5} + v^{1/2} x z^{-4} + vx^{1/2}z^{5} +
v^{1/2}x^{1/2}z^{6}\). 
$$
Clearly the second and the third terms  are both
dominated by the  first term. Hence the bound
simplifies as 
$$
\sum_{\substack{1 \le m \le v\\m~\text{squarefree}}} S_m(x) \le
x^{o(1)}\(vxz^{-1} + vx^{1/2}z^{5} + v^{1/2}x^{1/2}z^{6}\). 
$$

If we choose  
$$
z = (vx)^{1/14} 
$$
(thus~\eqref{eq:large z} holds), 
to balance the first and the third terms as $(vx)^{13/14}$,
which also gives 
$v^{19/14}x^{6/7}$ for the second term, we obtain 
$$
\sum_{\substack{1 \le m \le v\\m~\text{squarefree}}} S_m(x)\le 
x^{o(1)}\((vx)^{13/14}  +  v^{19/14}x^{6/7}\) .
$$
Clearly the bound is nontrivial only if $(vx)^{13/14}\le x$
or $v\le x^{1/13}$ in which case 
$(vx)^{13/14}  >v^{19/14}x^{6/7}$, 
thus the first term always dominates.
\end{proof}

Also as in~\cite{CojDuke}, we define
$$
D(x,y) = \sum_{y \le n \le 2x^{1/2}} \pi_n(x), 
$$
where 
$$
\pi_n(x) = \# \{p\le x\ | \ p\nmid N, \ 
n^2 \mid \#\Sha \}.
$$

This function is of independent interest. 
Our next result improves~\cite[Proposition~5.2]{CojDuke}. 

\begin{lemma}
\label{lem:Dxy}
Suppose $\E$  does not have complex 
multiplication and also assume that the GRH holds.
Then for any  real $1 \le y \le 2x^{1/2}$,  we have
$$
D(x,y)\le x^{13/7+o(1)} y^{-13/7}
$$
\end{lemma}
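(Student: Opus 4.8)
The plan is to reduce the estimate for $D(x,y)$ to the sums $S_m(x)$ already controlled in Lemmas~\ref{lem:Sum Sm long} and~\ref{lem:Sum Sm short}, following the structure of~\cite[Proposition~5.2]{CojDuke} but feeding in the sharper input. Recall that $n^2 \mid \#\Sha$ forces $n \mid s_p$ (up to the harmless factor $2$ coming from the parity of $4p-a_p^2$), and $4p-a_p^2 = s_p^2 r_p$ with $r_p$ squarefree; hence if $n^2 \mid \#\Sha$ then $n^2 \mid 4p-a_p^2$, i.e. $(4p-a_p^2)/n^2$ is an integer whose squarefree part is $r_p$. Writing $m$ for the squarefree part of this quotient, the condition $n^2 \mid 4p-a_p^2$ with squarefree kernel $m$ is exactly the statement that $m(4p-a_p^2)$ is a perfect square (namely $m$ times a square times $m$, giving $(mn\sqrt{\cdot})^2$-type identity); more carefully, $4p-a_p^2 = n^2 m k^2$ for some integer $k$ means $m(4p-a_p^2) = (mnk)^2$. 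Thus every prime counted in $\pi_n(x)$ contributes to $S_m(x)$ for the $m$ equal to the squarefree part of $(4p-a_p^2)/n^2$, and that $m$ satisfies $m \le (4p-a_p^2)/n^2 \le 4x/n^2$ by the Hasse bound $|a_p| \le 2\sqrt p$.

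Next I would sum over $n$. By the above, $\pi_n(x) \le \sum_{m \le 4x/n^2,\ m \text{ squarefree}} S_m(x)$ — actually one must be a little careful, since a single prime $p$ has a single $m$ but many $n$ with $n^2 \mid 4p - a_p^2$; this overcounting is by a divisor-type factor and is absorbed in $x^{o(1)}$. So
$$
D(x,y) = \sum_{y \le n \le 2x^{1/2}} \pi_n(x) \le x^{o(1)} \sum_{y \le n \le 2x^{1/2}} \ \sum_{\substack{1 \le m \le 4x/n^2\\ m \text{ squarefree}}} S_m(x).
$$
For the inner sum I apply Lemma~\ref{lem:Sum Sm short} with $v = 4x/n^2$, giving $\sum_{m \le 4x/n^2} S_m(x) \le (x \cdot 4x/n^2)^{13/14 + o(1)} = x^{13/7 + o(1)} n^{-13/7}$. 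Then
$$
D(x,y) \le x^{13/7+o(1)} \sum_{y \le n \le 2x^{1/2}} n^{-13/7}.
$$
Since $13/7 > 1$, the series $\sum_{n \ge y} n^{-13/7}$ converges and is $\ll y^{1 - 13/7} = y^{-6/7}$ — wait, that gives $D(x,y) \le x^{13/7+o(1)} y^{-6/7}$, which is weaker than the claimed $x^{13/7+o(1)} y^{-13/7}$. So the naive divisor-sum bound is not enough; the point must be that one should not sum $S_m(x)$ over all $m \le 4x/n^2$ but only over the single relevant $m$ for each prime, and then sum over $n$ \emph{first}, or equivalently use the full range $1 \le m \le 4x/y^2$ once rather than once per $n$.

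The cleaner route, which I expect to be the intended one: fix a prime $p$ counted in $D(x,y)$, so $n^2 \mid 4p-a_p^2$ for some $n \ge y$; then in particular the largest such $n$, call it $n_p = s_p$, satisfies $s_p \ge y$, and $m_p := r_p = (4p-a_p^2)/s_p^2 \le 4x/y^2$. Conversely $m_p(4p-a_p^2) = (s_p r_p)^2$ is a square with $m_p$ squarefree, so $p$ is counted in $S_{m_p}(x)$. Hence, accounting for the $\log$-many choices of $n \in [y, s_p]$ by an $x^{o(1)}$ factor,
$$
D(x,y) \le x^{o(1)} \sum_{\substack{1 \le m \le 4x/y^2\\ m \text{ squarefree}}} S_m(x) \le x^{o(1)} \bigl(x \cdot 4x/y^2\bigr)^{13/14} = x^{13/7 + o(1)} y^{-13/7},
$$
using Lemma~\ref{lem:Sum Sm short} with $v = 4x/y^2$ (valid since $y \le 2x^{1/2}$ forces $v \le 4x/y^2 \le \ldots$, and one checks $v \le 4x$ holds once $y \ge 1$, with the $o(1)$ absorbing boundary cases). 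The main obstacle — and the step to get right — is precisely this bookkeeping: arguing that each prime is charged to exactly one squarefree $m$ (its squarefree radical $r_p$) in the range $m \le 4x/y^2$, with the multiplicity over $n$ being $x^{o(1)}$, rather than looping the full $S_m$-sum once per $n$; getting that wrong costs a factor $y^{7/7}=y$ and yields only $y^{-6/7}$. Everything else is substitution into Lemma~\ref{lem:Sum Sm short}.
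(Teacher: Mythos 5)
Your proof is correct and follows essentially the same approach as the paper: reduce $D(x,y)$ to $x^{o(1)}\sum_{m\le 4x/y^2,\,m\ \text{squarefree}}S_m(x)$ (each prime is charged to the single squarefree $m=r_p$, with the multiplicity in $n$ absorbed into $x^{o(1)}$) and then apply Lemma~\ref{lem:Sum Sm short} with $v=4x/y^2$. You bypass the paper's case split (Lemma~\ref{lem:Sum Sm long} for $y<x^{5/12}$, Lemma~\ref{lem:Sum Sm short} otherwise), which is in fact dispensable since for $y<x^{6/13}$ the target bound already exceeds the trivial estimate $D(x,y)\le x^{1+o(1)}$, so Lemma~\ref{lem:Sum Sm short} alone suffices throughout.
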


\begin{proof} It is easy to check that~\cite[Bound~(36)]{CojDuke}
can in fact be replaced by the following estimate
$$
D(x,y) \le x^{o(1)} 
\sum_{\substack{m \le 4x/y^2\\m~\text{squarefree}}} S_m(x)
$$
We note that this bound differs from~\cite[Bound~(36)]{CojDuke} only 
in that we still require $m$ to be squarefree. This condition
is present in all considerations which have lead
to~\cite[Bound~(36)]{CojDuke}, but is not included in that bound.
Preserving this condition does not give any advantage for the argument
of~\cite{CojDuke} but is  important for us. 
Using Lemma~\ref{lem:Sum Sm long} for $y < x^{5/12}$
and Lemma~\ref{lem:Sum Sm short} otherwise,
we obtain the result.
\end{proof}

\section{Proofs of Theorems~\ref{thm:Sha} 
and~\ref{thm:sigma}}

 As in the proof
of~\cite[Proposition~5.3]{CojDuke} we see
that for any $1 \le y\le 2x^{1/2}$ we have
$$
\pi_{TS}(x) = \alpha \pi(x) + O\(D(x,y) + x^{1/2+ o(1)} y \)
$$
where  $\alpha$ is as in~\eqref{eq:CD asymp}.  
Using the second bound of Lemma~\ref{lem:Dxy}, we derive
$$
\pi_{TS}(x) = \alpha \pi(x) + 
O\( x^{13/7+o(1)} y^{-13/7} + x^{1/2+ o(1)} y \),
$$
and then selecting $y = x^{19/40}$, 
we conclude the proof of Theorem~\ref{thm:Sha}.

To proof Theorem~\ref{thm:sigma}, as in~\cite{CojDav}, 
we note that 
$$
\sigma(x;u,v) \le \sum_{\substack{u-v \le m \le
u\\m~\text{squarefree}}} S_m(x).
$$
Now Lemmas~\ref{lem:Sum Sm long} and~\ref{lem:Sum Sm short}
imply the result.

\section{Remarks}

Under some additional assumptions, 
Cojocaru and David~\cite[Theorem~3]{CojDav}
give sharper bounds on the 
error term in the asymptotic formula of
Lemma~\ref{lem:Char}. In turn, this leads to further sharpening the
bound of  Theorem~\ref{thm:Sha} (under the same
additional assumptions).

We also note that, Lemma~\ref{lem:Dxy} shows
that under the GRH the bound $\# \Sha \le x^{12/13 + o(1)}$
holds for all but $o(\pi(x))$ primes $p\le x$. 

It would be very interesting to obtain an unconditional 
proof of the asymptotic formula~\eqref{eq:CD asymp}
with $R(x) = o(\pi(x))$. 

In fact, it is  possible to obtain an unconditional
version of Lemma~\ref{lem:Char}. However, it seems to be 
too weak to leads to an asymptotic formula for 
$\pi_{TS}(x)$. Indeed, to use this unconditional
version, one  needs a
nontrivial estimate on $D(x,y)$ 
for rather small values of $y$. 
Although the approach of Lemma~\ref{lem:Dxy} 
admits an unconditional version, it seems
highly unlikely that
without some principally new ideas one can 
obtain an unconditional asymptotic formula for 
$\pi_{TS}(x)$.


\section*{Acknowledgement}

The author is grateful to  Alina Cojocaru, 
Bjorn Poonen and Joseph  Silverman for useful 
comments.

This work was supported in part by ARC grant
DP0556431.

\end{document}